\numberwithin{equation}{section}
\newtheorem{thm}{Theorem}[section]
\newtheorem{lem}[thm]{{Lemma}}
\newtheorem{rem}[thm]{{Remark}}
\def\C{\mathscr C}
\def\Ac{\mathrm{Aut}_{\C}}
\def\Hom{\mathrm{Hom}}
\def\Homc{\Hom_{\C}}
\def\Id{\mathrm{Id}}
\def\Mor{\mathrm{Mor}}
\def\Obj{\mathrm{Obj}}
\def\lbr{\left(\begin{array}{c}}
\def\lbrt{\left(\begin{array}{cc}}
\def\lbrth{\left(\begin{array}{ccc}}
\def\rbr{\end{array}\right)}
\title[The spectrum of the singularity category]{The spectrum of the singularity category of a category algebra}
\author{Ren Wang}
\keywords{finite
	EI category, category algebra, tensor triangulated category, triangular spectrum} \subjclass[2010]{Primary 16G10; Secondary 16D90, 18E30}
\address{School of Mathematical Sciences, University of Science and Technology of China, Hefei, Anhui 230026, P. R. China}
\email{renw@mail.ustc.edu.cn}
\date{\today}
\begin{document}
\begin{abstract}
Let $\C$ be a finite projective EI category and $k$ be a field. The singularity category of the category algebra $k\C$ is a tensor triangulated category. We compute its spectrum in the sense of Balmer.
\end{abstract}

\maketitle

\section{Introduction}
Let $k$ be a field, and $\C$ be a finite skeletal EI category; see \cite{PWebb2}. Here, finite means that $\C$ has
only finitely many morphisms, and the EI condition means
that all endomorphisms in $\C$ are isomorphisms. In particular,
$\Homc(x,x)=\Ac(x)$ is a finite group for each object $x$. Denote by $k\Ac(x)$ the group algebra. 
 
 Denote by $k\C$-mod the category of finitely generated left $k\C$-modules. Denote by $k\C$-proj (resp. $k\C$-Gproj) the full subcategory of $k\C$-mod consisting of all projective (resp. Gorenstein-projective) modules, and denote by $k\C\text{-{\underline{\rm Gproj}}}$ the corresponding stable category modulo projectives. Denote by  ${\rm D}^b(k\C)={\rm D}^b(k\C\text{-mod})$ the bounded derived category of $k\C$-mod. Recall from \cite{Buch} that the singularity category of $k\C$ is the Verdier quotient category ${\rm D}_{\rm sg}(k\C)={\rm D}^b(k\C)/{\rm D}^b(k\C\text{-{\rm proj}})$.
 
 In recent decades, the theory of tensor triangulated geometry has been studied and developed; see \cite{Ba,Kl,St} for instance. It has  important applications in algebraic geometry, algebraic topology and representation theory.

Recall that ${\rm D}^b(k\C)$ is a tensor triangulated category; see~\cite{XF2}. 
Denote by ${\rm Spc} {\rm D}^b(k\C)$ the set of all prime ideals of ${\rm D}^b(k\C)$, which can be topologized; see~\cite{Ba,XF2}. The obtained topological space ${\rm Spc} {\rm D}^b(k\C)$ is called the \emph{spectrum} of ${\rm D}^b(k\C)$.  
Recall from ~\cite[Theorem 3.3.1]{XF2} that the spectrum ${\rm Spc} {\rm D}^b(k\C)$ of ${\rm D}^b(k\C)$ is homeomorphic to $\bigsqcup\limits_{x\in \C} {\rm Spc} {\rm D}^b(k\C_x)$, the disjoint union of the spectrum of ${\rm D}^b(k\C_x)$, where $\C_x$ is the full subcategory of $\C$ with object $\{x\}$. We give a different proof of this result via Verdier quotient functors (or called localization functors); see Theorem~\ref{S}.

Recall from ~\cite{WR} that $\C$ is \emph{projective over $k$} if each
$k{\rm Aut}_{\C}(y)$-$k{\rm Aut}_{\C}(x)$-bimodule $k{\rm Hom}_{\C}(x,y)$ is projective on both sides. 

Let $\C$ be a finite transporter category. Recall from ~\cite[Theorem 4.2.1]{XF2} that there is a homeomorphism between
${\rm Spc} (k\C\text{-{\underline{\rm Gproj}}})$ and $ \bigsqcup\limits_{x\in \C} {\rm Spc} (kG_x\text{-{\underline{\rm mod}}})
$, which is
a disjoint union, and where $G_x=\Ac(x)$, and $kG_x\text{-{\underline{\rm mod}}}$ is the stable category modulo projectives. Recall from ~\cite{WR} that a finite transporter category is a finite projective EI category. We generalize the above result to finite projective EI categories; see Theorem~\ref{PS}.


\section{Tensor triangular geometry}

Recall from \cite{Ba, XF2} that a \emph{tensor triangulated category} is a triple ($\mathscr K$, $\otimes$, $\text{1}$) consisting of a triangulated category $\mathscr K$, a symmetric monoidal (tensor) product $\otimes : \mathscr K\times \mathscr K\rightarrow \mathscr K$, which is exact in each variable and with respect to which there exists an identity $\text{1}$.                     

A \emph{tensor triangulated functor} $F:\mathscr K\rightarrow \mathscr K'$ is an exact functor respecting the monoidal structures and preserves the tensor identity.

Let $\mathscr K$ be a tensor triangulated category. A subcategory $\mathcal I$ of $\mathscr K$ is a \emph{tensor ideal} if it is a thick triangulated subcategory which is closed under tensoring with objects in $\mathscr K$. A tensor ideal $\mathcal P$ of $\mathscr K$ is said to be \emph{prime} if $\mathcal P$ is properly contained in $\mathscr K$ and $x\otimes y\in \mathcal P$ implies either $x\in \mathcal P$ or $y\in \mathcal P$.

Denote by ${\rm Spc} \mathscr K$ the set of all prime ideals of $\mathscr K$. If $x\in \mathscr K$, its \emph{support} is defined to be
\[{\rm supp}_{\mathscr K}(x)=\{\mathcal P\in {\rm Spc} \mathscr K\mid x\notin \mathcal P\}.\]
One can topologize ${\rm Spc} \mathscr K$ by asking the following to be an open basis
\[U(x)={\rm Spc} \mathscr K-{\rm supp}_{\mathscr K}(x)=\{\mathcal P\in {\rm Spc} \mathscr K\mid x\in \mathcal P\}.\]
Indeed, every quasi-compact open subset of ${\rm Spc} \mathscr K$ is of the form $U(x)$ for some $x\in \mathscr K$; see~\cite{Ba, XF2}.

Let $q : \mathscr K\rightarrow {\mathscr K}/ {\mathcal I}$ be a localization functor, where $\mathscr K$ is a tensor triangulated category, $\mathcal I$ is a tensor ideal of $\mathscr K$ and ${\mathscr K}/ {\mathcal I}$ is the corresponding Verdier quotient category. The category ${\mathscr K}/ {\mathcal I}$ inherits the tensor structure of $\mathscr K$; see~\cite[Remark 3.10]{Ba}.

The following lemma is well-known; see~\cite[Propositions 3.6 and 3.11]{Ba}.
\begin{lem}\label{VH}
	Let $q : \mathscr K\rightarrow {\mathscr K} / {\mathcal I}$ be a localization functor. Then we have the following statements.
\begin{enumerate}
\item The map ${\rm Spc}(q) : {\rm Spc}({\mathscr K}/ {\mathcal I})\longrightarrow {\rm Spc}(\mathscr K)$ sending $Q$ to $q^{-1}(Q)$, the original image of $Q$ in the map $q$, induces a homeomorphism between ${\rm Spc}({\mathscr K} / {\mathcal I})$ and the subspace $\{\mathcal P\in {\rm Spc} \mathscr K\mid \mathcal I\subseteq \mathcal P\}$ of ${\rm Spc}(\mathscr K)$ of those primes containing $\mathcal I$.
\item The map ${\rm Spc}(q) : {\rm Spc}({\mathscr K}/ {\mathcal I})\longrightarrow {\rm Spc}(\mathscr K)$ satisfies $({\rm Spc}(q))^{-1}({\rm supp}_{\mathscr K}(x))={\rm supp}_{{\mathscr K}/ {\mathcal I}}(x)$ for each object $x$. 
\item For a subcategory $P$ of ${\mathscr K}$ with $\mathcal I \subseteq P$, we have $q(P)$ is a subcategory of ${\mathscr K}/ {\mathcal I}$ and $q^{-1}(q(P))=P$. \hfill $\square$
\end{enumerate}
\end{lem}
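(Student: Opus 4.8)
The statement collects standard facts about Verdier quotients of tensor triangulated categories; it is \cite[Propositions 3.6 and 3.11]{Ba}, and I indicate the route I would take. The plan is to prove (3) first, since it is a purely triangulated assertion, then to deduce (1) from it together with the fact that $q$ is an essentially surjective tensor triangulated functor, and finally to observe that (2) is formal. Throughout, $P$ is understood to be a thick subcategory (in the applications, a thick tensor ideal).

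For (3) I would use that $q$ is the identity on objects and that $q(X)\cong q(Y)$ in $\mathscr K/\mathcal I$ if and only if there are a third object $Z$ and morphisms $X\xleftarrow{s}Z\xrightarrow{t}Y$ whose cones lie in $\mathcal I$. Since $P$ is thick and $\mathcal I\subseteq P$, inspecting the triangles on $s$ and $t$ and applying the octahedral axiom yields $X\in P\Leftrightarrow Y\in P$. Consequently $q(P)$, the full subcategory of $\mathscr K/\mathcal I$ on the objects isomorphic to some $q(Y)$ with $Y\in P$, is a thick subcategory; the inclusion $P\subseteq q^{-1}(q(P))$ is immediate, and the reverse inclusion is precisely the implication ``$q(X)\in q(P)\Rightarrow X\in P$'' just obtained.

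For (1): given a prime ideal $Q$ of $\mathscr K/\mathcal I$, the preimage $q^{-1}(Q)$ is thick (the preimage of a thick subcategory under an exact functor is thick), a tensor ideal (since $q$ is a tensor functor), proper (since $q$ is essentially surjective), prime (from $q(x)\otimes q(y)=q(x\otimes y)$), and contains $\mathcal I$ (since $\mathcal I\subseteq q^{-1}(0)$). Conversely, for a prime $\mathcal P\supseteq\mathcal I$ of $\mathscr K$, the subcategory $q(\mathcal P)$ is a prime tensor ideal of $\mathscr K/\mathcal I$; by (3) and essential surjectivity of $q$, the assignments $Q\mapsto q^{-1}(Q)$ and $\mathcal P\mapsto q(\mathcal P)$ are mutually inverse bijections between ${\rm Spc}(\mathscr K/\mathcal I)$ and $\{\mathcal P\in{\rm Spc}\,\mathscr K\mid \mathcal I\subseteq\mathcal P\}$. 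This bijection is continuous because ${\rm Spc}(q)^{-1}(U(x))=U(q(x))$, and it is open onto its image because every object of $\mathscr K/\mathcal I$ is of the form $q(x)$ and ${\rm Spc}(q)(U(q(x)))=U(x)\cap\{\mathcal P\mid\mathcal I\subseteq\mathcal P\}$; a continuous open bijection being a homeomorphism, (1) follows. Finally, (2) is immediate by unwinding the definitions: $Q\in({\rm Spc}(q))^{-1}({\rm supp}_{\mathscr K}(x))$ iff $x\notin q^{-1}(Q)$ iff $q(x)\notin Q$ iff $Q\in{\rm supp}_{\mathscr K/\mathcal I}(x)$.

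The only step that requires genuine triangulated input is the octahedral-axiom argument in (3) --- equivalently, Verdier's correspondence between thick subcategories of a quotient and thick subcategories of the ambient category containing the kernel --- and that is where I expect the substance to lie; everything else is bookkeeping with primes, supports and basic open subsets. In the final write-up I would simply cite \cite{Ba} for (3) rather than reproduce the argument.
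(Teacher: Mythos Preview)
Your proposal is correct, and in fact the paper does not prove this lemma at all: it is stated as well-known with a citation to \cite[Propositions 3.6 and 3.11]{Ba} and closed with a bare $\square$. Your sketch is sound (the octahedral axiom is not really needed in (3) --- the 2-out-of-3 property of a thick subcategory already gives $X\in P\Leftrightarrow Z\in P\Leftrightarrow Y\in P$), and your stated plan to simply cite \cite{Ba} in the final write-up is exactly what the paper does.
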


\section{Category algebras}

Let $k$ be a field and $\C$ be a finite category. Denote by $\Mor\C$ the finite set of all morphisms in
$\C$. The \emph{category algebra} \emph{k}$\C$ of $\C$ is defined as
follows: $\emph{k}\C=\bigoplus\limits_{\alpha \in \Mor\C}k\alpha$ as
a $k$-vector space and the product $*$ is given by the rule
 \[\alpha * \beta=\left\{\begin{array}{ll}
  \alpha\circ\beta, & \text{ if }\text{$\alpha$ and $\beta$ can be composed in $\C$}; \\
 0, & \text{otherwise.}
 \end{array}\right.\]
The unit is given by $1_{k\C}=\sum\limits_{x \in \Obj\C }\Id_x$,
where $\Id_x$ is the identity endomorphism of an object $x$ in $\C$.

Recall from \cite[Proposition 2.2]{PWebb2} that \emph{k}$\C$ is  Morita equivalent to \emph{k}$\mathscr D$ if $\C$ and $\mathscr D$ are two equivalent finite categories. In particular, $k\C$ is Morita equivalent to $k\C_0$, where $\C_0$ is any skeleton of $\C$. So we may assume that $\C$ is \emph{skeletal}, that is, for any two distinct objects $x$ and $y$ in $\C$, $x$ is not isomorphic to $y$.

Throughout the rest of this paper, we assume that $k$ is a field and $\C$ is a finite skeletal EI category if without remind.

Denote by $k$-mod the category of finite
dimensional $k$-vector spaces and $(k\text{-mod})^{\C}$ the category of covariant functors from $\C$ to $k$-mod. Recall that the category $k\C$-mod is identified with $(k\text{-mod})^{\C}$; see \cite[Proposition 2.1]{PWebb2}.

 Recall that the category $k\C$-mod is a symmetric monoidal category, write as ($k\C$-mod,$\hat{\otimes}$,$\underline{k}$). More precisely, the tensor product $\hat{\otimes}$ is defined by \[(M\hat{\otimes} N)(x)=M(x)\otimes_k N(x)\]
for any $M, N\in (k\text{-mod})^{\C}$ and $x\in {\rm Obj}\C$, and $\alpha.(m\otimes n)=\alpha.m\otimes \alpha.n$ for any $\alpha\in \Mor\C, m\in M(x), n\in N(x)$; see~\cite{XF2, XF3}.  The tensor identity $\underline{k}$ is the trivial $k\C$-module, which is also called the \emph{constant functor} sending each object to $k$ and each morphism to identity map of $k$.

Since -$\hat{\otimes}$- is exact in both variables, it gives rise to a tensor product on ${\rm D}^b(k\C)={\rm D}^b(k\C\text{-mod})$. We shall still write $\hat{\otimes}$ and $\underline{k}$ for the tensor product and tensor identity in ${\rm D}^b(k\C)$.


There is a natural partial order on the set of objects in $\C$: $x\leq y$ if and only if $\Homc(x,y)\neq \emptyset$. This partial order in turn enables us to filtrate each $k\C$-module $M$ by group modules. Let $\C_x$ be the full subcategory of $\C$ with object $\{x\}$. Denote by $M_x=M(x)$ the subspace of $M$. It becomes a $k\C_x$-module. It also can be regarded as a $k\C$-module (but not necessarily a submodule of $M$). For each object $x$, there is a simple module $S_{x,k} : \C \rightarrow k$-mod sending $x$ to $k$ and other objects to zero. In general we have $M_x=M\hat{\otimes} S_{x,k}$; see~\cite[section 2.2]{XF2}.

\section{Spectra of derived categories}

Recall that the inclusion $\C_x\overset{\iota}{\hookrightarrow} \C$ induces a restriction
\[{\rm res}_x : k\C\text{-{\rm mod}}\longrightarrow k\C_x\text{-{\rm mod}}, \ \ \ M\mapsto M\circ \iota.\]
It is exact and preserves both tensor products and tensor identity. We write the resulting tensor derived functor as ${\rm Res}_x :  {\rm D}^b(k\C)\rightarrow {\rm D}^b(k\C_x)$; see \cite{XF2}.

Let $R$ be a left noetherian ring with a unit and $e$ be an idempotent of $R$. The Schur functor (\cite[Chapter 6]{G}) is defined to be
\[S_e=eR\otimes_R\text{-} : R\text{-{\rm mod}}\longrightarrow eRe\text{-{\rm mod}},\]
where $eR$ is viewed as a natural $eRe$-$R$-bimodule via the multiplication map. Let $\mathcal N_e$ be the full subcategory of ${\rm D}^b(R\text{-{\rm mod}})$ consisting of complex $X^{\bullet}$ with its cohomology groups ${\rm H}^n(X^{\bullet})$ lying in the kernel of $S_e$; see \cite[section 2]{XChen}. Then the Schur functor $S_e$ induces a natural equivalence of triangulated categories
\[{\rm D}^b(eRe\text{-{\rm mod}})\simeq {\rm D}^b(R\text{-{\rm mod}})/ \mathcal N_e\]
by \cite[Lemma 2.2]{XChen}, where the right hand side is a Verdier quotient category of ${\rm D}^b(R\text{-{\rm mod}})$.

Let $\C$ be a finite EI category. For each object $x$, let $e={\rm Id}_x$ be an idempotent of $k\C$. Then $k\C_x=ek\C e$. We observe that the Schur functor $S_e=ek\C\otimes_{k\C}\text{-}\simeq {\rm res}_x$ and the corresponding $\mathcal N_e:=\mathcal N^x_e=\{X^{\bullet}\in {\rm D}^b(k\C)\mid X^{\bullet}_x=0\}$. Here the $i$-th component of $X^{\bullet}_x$ is $X^i_x=X^i\hat{\otimes} S_{x,k}$, and hence we have $X_x^{\bullet}=X^{\bullet}\hat{\otimes} S_{x,k}$. Then we have the following result by \cite[Lemma 2.2]{XChen}.
\begin{rem}\label{V}
\begin{enumerate}
\item \emph{The functor 
${\rm Res}_x : {\rm D}^b(k\C)\rightarrow {\rm D}^b(k\C_x)\simeq {\rm D}^b(k\C)/ \mathcal N^x_e$ is a localization functor for each object $x$ in $\C$}. 
	
\item \emph{By Lemma~\ref{VH}, there is a homeomorphism
\[{\rm Spc}({\rm Res}_x) : {\rm Spc}{\rm D}^b(k\C_x)\overset{\sim}{\longrightarrow} V_x=\{\mathcal P\in {\rm Spc}{\rm D}^b(k\C)\mid \mathscr N^x_e\subseteq \mathcal P\},\]
where $V_x\subseteq {\rm Spc}{\rm D}^b(k\C)$ is a subspace of ${\rm Spc}{\rm D}^b(k\C)$ of those primes containing $\mathscr N^x_e$}.
\end{enumerate}	
\end{rem}

\begin{lem}\cite[Proposition 3.2.4]{XF2}\label{P}
Assume that $\mathcal P$ is a prime ideal in ${\rm Spc} {\rm D}^b(k\C)$. Then ${\rm Res}_x \mathcal P\subsetneq {\rm D}^b(k\C_x)$ for a unique $x$. Whence ${\rm Res}_x \mathcal P\in {\rm Spc} {\rm D}^b(k\C_x)$ and $\mathcal P={\rm Res}_x^{-1}({\rm Res}_x \mathcal P)$.
\end{lem}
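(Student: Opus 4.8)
The plan is to analyze, for a fixed prime ideal $\mathcal P$ of ${\rm D}^b(k\C)$, the family of restrictions $\{{\rm Res}_x\mathcal P\}_{x\in\C}$ and show that exactly one of them is proper. First I would recall that each ${\rm Res}_x$ is a tensor triangulated functor (it preserves $\hat\otimes$ and $\underline k$), so ${\rm Res}_x\mathcal P$ is a tensor ideal of ${\rm D}^b(k\C_x)$, and more precisely its thick closure $\langle {\rm Res}_x\mathcal P\rangle$ is a tensor ideal that is either all of ${\rm D}^b(k\C_x)$ or a proper one; since ${\rm D}^b(k\C_x)={\rm D}^b(kG_x)$ for the finite group $G_x=\Ac(x)$ and $kG_x$ is a (symmetric) algebra, a proper tensor ideal contains no object isomorphic to $\underline k$. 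The key observation is the relation $X^\bullet_x=X^\bullet\hat\otimes S_{x,k}$ recorded before Remark~\ref{V}: restriction to $x$ is, up to the identification ${\rm D}^b(k\C_x)\simeq{\rm D}^b(k\C)/\mathcal N^x_e$, just tensoring with $S_{x,k}$.

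The heart of the argument is an \emph{existence} step and a \emph{uniqueness} step. For existence, I would use that $\bigoplus_{x\in\C}S_{x,k}$ has a finite filtration whose subquotients build up $\underline k$ — more precisely, the constant functor $\underline k$ lies in the thick subcategory generated by the $S_{x,k}$, since every $k\C$-module is filtered by modules supported at single objects (the partial order filtration described in Section~3). Hence if ${\rm Res}_x\mathcal P={\rm D}^b(k\C_x)$ for \emph{every} $x$, then $S_{x,k}\hat\otimes Y\in\mathcal P$ for appropriate $Y$, and running this through the filtration of $\underline k$ together with the two-out-of-three property of the thick ideal $\mathcal P$ forces $\underline k\in\mathcal P$, hence $\mathcal P={\rm D}^b(k\C)$, contradicting that $\mathcal P$ is prime (hence proper). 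So at least one $x$ gives a proper restriction. For uniqueness, suppose ${\rm Res}_x\mathcal P$ and ${\rm Res}_y\mathcal P$ are both proper with $x\neq y$. Then $S_{x,k}\notin\mathcal P$ (because ${\rm Res}_x(S_{x,k})\simeq\underline k\notin{\rm Res}_x\mathcal P$, using properness) and similarly $S_{y,k}\notin\mathcal P$. But $S_{x,k}\hat\otimes S_{y,k}=S_{x,k}(z)\otimes_k S_{y,k}(z)$ evaluated objectwise is zero for $x\neq y$ (they have disjoint support), so $S_{x,k}\hat\otimes S_{y,k}=0\in\mathcal P$; since $\mathcal P$ is prime this forces $S_{x,k}\in\mathcal P$ or $S_{y,k}\in\mathcal P$, a contradiction. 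Hence the object $x$ is unique.

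Having isolated the unique $x$ with ${\rm Res}_x\mathcal P\subsetneq{\rm D}^b(k\C_x)$, I would then check ${\rm Res}_x\mathcal P$ is actually prime: it is a proper tensor ideal, and if $u\hat\otimes v\in{\rm Res}_x\mathcal P$ in ${\rm D}^b(k\C_x)$, lift $u,v$ along the (full, essentially surjective) localization functor ${\rm Res}_x$ to objects of ${\rm D}^b(k\C)$, whose tensor product then lies in $\mathcal P$ modulo $\mathcal N^x_e\subseteq\mathcal P$ — here I must use that $\mathcal N^x_e\subseteq\mathcal P$, which holds precisely because, by the uniqueness just proved, ${\rm Res}_y\mathcal P$ is improper for all $y\neq x$, and an object of $\mathcal N^x_e$ is built (via the same single-object filtration) from pieces supported at objects $y\neq x$, each of which lies in $\mathcal P$; Lemma~\ref{VH}(1) then identifies ${\rm Spc}{\rm D}^b(k\C_x)$ with the primes of ${\rm D}^b(k\C)$ containing $\mathcal N^x_e$. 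Finally, from $\mathcal N^x_e\subseteq\mathcal P$ and Remark~\ref{V}(1)--(2), the equality $\mathcal P={\rm Res}_x^{-1}({\rm Res}_x\mathcal P)$ is exactly the statement that $\mathcal P$ corresponds to $q^{-1}(Q)$ under the localization $q={\rm Res}_x$ with $Q={\rm Res}_x\mathcal P$. I expect the main obstacle to be the existence step — making rigorous the claim that $\underline k$ (and more generally any bounded complex) lies in the thick tensor-subcategory generated by the simples $S_{x,k}$, so that $\mathcal P$ improper on all restrictions forces $\underline k\in\mathcal P$; this requires carefully exploiting the finite length filtration of $k\C$-modules along the poset of objects and the behavior of $\hat\otimes$ under that filtration.
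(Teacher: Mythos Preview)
The paper does not supply its own proof of this lemma; it is quoted from \cite[Proposition~3.2.4]{XF2} and used as a black box. Your outline is in fact the standard argument (and essentially the one given in \cite{XF2}): uniqueness via $S_{x,k}\hat\otimes S_{y,k}=0$ for $x\neq y$ together with primeness, and existence via the poset filtration of $\underline k$ by the $S_{x,k}$ together with thickness of $\mathcal P$. So there is nothing to compare against in the present paper, but your plan is sound.

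Two places deserve tightening. First, in the uniqueness step you write ``$\underline k\notin{\rm Res}_x\mathcal P$, using properness''. This is correct but needs the observation that ${\rm Res}_x\mathcal P$ is automatically closed under tensoring with arbitrary objects of ${\rm D}^b(k\C_x)$ (because ${\rm Res}_x$ is essentially surjective and $\mathcal P$ is a tensor ideal), so that $\underline k\in{\rm Res}_x\mathcal P$ would force ${\rm Res}_x\mathcal P$ to be everything. Second, your existence step compresses the implication ``${\rm Res}_x\mathcal P={\rm D}^b(k\C_x)\Rightarrow S_{x,k}\in\mathcal P$'' into the phrase ``$S_{x,k}\hat\otimes Y\in\mathcal P$ for appropriate $Y$''. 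The clean version is: choose $Y\in\mathcal P$ with ${\rm Res}_xY\simeq\underline k$; then $Y\hat\otimes S_{x,k}={\rm Inc}_x({\rm Res}_xY)\simeq{\rm Inc}_x(\underline k)=S_{x,k}$, and since $\mathcal P$ is a tensor ideal this yields $S_{x,k}\in\mathcal P$. With that identity made explicit, your existence and uniqueness arguments go through, and the final clause follows exactly as you say from Lemma~\ref{VH} and Remark~\ref{V}. Note that the paper's Lemma~\ref{EC} records precisely the equivalence $S_{x,k}\notin\mathcal P\Leftrightarrow{\rm Res}_x\mathcal P\subsetneq{\rm D}^b(k\C_x)$, but its proof \emph{invokes} the present lemma, so you are right not to appeal to it.
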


\begin{lem}\label{EC}
Assume that $\mathcal P$ is a prime ideal in ${\rm Spc} {\rm D}^b(k\C)$. Then the following are equivalent for each object $x$ in $\C$ :
	\begin{enumerate}
		\item $S_{x,k}\notin \mathcal P$;
		\item $\mathcal N^x_e\subseteq \mathcal P$;
		\item ${\rm Res}_x \mathcal P\subsetneq {\rm D}^b(k\C_x)$.
	\end{enumerate}
\end{lem}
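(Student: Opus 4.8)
The plan is to prove the cycle of implications $(1)\Rightarrow(2)\Rightarrow(3)\Rightarrow(1)$, using the tensor-ideal structure of $\mathcal P$ together with the identification $\mathcal N^x_e=\{X^\bullet\in {\rm D}^b(k\C)\mid X^\bullet\hat\otimes S_{x,k}=0\}$ recalled just before the statement. The key observation throughout is that $X^\bullet_x = X^\bullet\hat\otimes S_{x,k}$, so membership in $\mathcal N^x_e$ is exactly the condition $X^\bullet\hat\otimes S_{x,k}\simeq 0$ in ${\rm D}^b(k\C)$.

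First I would show $(1)\Rightarrow(2)$. Assume $S_{x,k}\notin\mathcal P$. Take any $X^\bullet\in\mathcal N^x_e$; then $X^\bullet\hat\otimes S_{x,k}\simeq 0$, and in particular $X^\bullet\hat\otimes S_{x,k}\in\mathcal P$ (the zero object lies in every ideal). Since $\mathcal P$ is prime, either $X^\bullet\in\mathcal P$ or $S_{x,k}\in\mathcal P$; the latter is excluded by hypothesis, so $X^\bullet\in\mathcal P$. Hence $\mathcal N^x_e\subseteq\mathcal P$.

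Next, $(2)\Rightarrow(3)$ is immediate from Remark~\ref{V}(2): if $\mathcal N^x_e\subseteq\mathcal P$ then $\mathcal P\in V_x$, and under the homeomorphism ${\rm Spc}({\rm Res}_x)$ the prime $\mathcal P$ corresponds to a genuine prime ideal of ${\rm D}^b(k\C_x)$, so in particular ${\rm Res}_x\mathcal P$ is a proper subcategory, i.e. ${\rm Res}_x\mathcal P\subsetneq {\rm D}^b(k\C_x)$. Alternatively one can argue directly: if ${\rm Res}_x\mathcal P = {\rm D}^b(k\C_x)$ then, pulling back along the localization functor and using Lemma~\ref{VH}(3) together with $\mathcal N^x_e = \ker({\rm Res}_x)\subseteq\mathcal P$, one gets $\mathcal P = {\rm Res}_x^{-1}({\rm Res}_x\mathcal P) = {\rm Res}_x^{-1}({\rm D}^b(k\C_x)) = {\rm D}^b(k\C)$, contradicting that $\mathcal P$ is prime (hence proper).

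Finally, for $(3)\Rightarrow(1)$, suppose toward a contradiction that $S_{x,k}\in\mathcal P$. Since $k\C_x = k\Ac(x)$ is a group algebra, $\underline k = \underline k|_{\C_x}$ is an invertible (indeed a $\otimes$-unit after restriction) object, and $S_{x,k}$ restricts to the trivial module; more to the point, ${\rm Res}_x(S_{x,k})$ is the tensor identity of ${\rm D}^b(k\C_x)$. If $S_{x,k}\in\mathcal P$ then ${\rm Res}_x(S_{x,k})\in{\rm Res}_x\mathcal P$, so ${\rm Res}_x\mathcal P$ contains the tensor identity and is therefore all of ${\rm D}^b(k\C_x)$, contradicting (3). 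Hence $S_{x,k}\notin\mathcal P$, completing the cycle. I expect the only genuinely delicate point to be verifying that ${\rm Res}_x(S_{x,k})$ really is the tensor identity $\underline k$ of ${\rm D}^b(k\C_x)$ (equivalently, that $S_{x,k}|_{\C_x}=\underline k$ as a $k\Ac(x)$-module and that ${\rm Res}_x$ preserves $\hat\otimes$ and its unit), which is exactly the content recalled at the start of Section~4; everything else is a formal manipulation of prime tensor ideals and the kernel of the localization functor.
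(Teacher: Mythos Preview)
Your proof is correct and follows the same cycle $(1)\Rightarrow(2)\Rightarrow(3)\Rightarrow(1)$ as the paper, with $(1)\Rightarrow(2)$ and your alternative argument for $(2)\Rightarrow(3)$ matching the paper's proof almost verbatim.

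The only notable difference is in $(3)\Rightarrow(1)$. The paper, assuming $S_{x,k}\in\mathcal P$, observes that ${\rm Res}_x X^\bullet = X^\bullet\hat\otimes S_{x,k}\in{\rm Res}_x\mathcal P$ for \emph{every} $X^\bullet$, and then invokes Lemma~\ref{P} (quoted from \cite{XF2}) to get $X^\bullet\in{\rm Res}_x^{-1}({\rm Res}_x\mathcal P)=\mathcal P$, contradicting properness of $\mathcal P$. You instead note that ${\rm Res}_x(S_{x,k})$ is the tensor unit of ${\rm D}^b(k\C_x)$ and conclude directly that ${\rm Res}_x\mathcal P={\rm D}^b(k\C_x)$, contradicting (3). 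Your route avoids Lemma~\ref{P} and is slightly more self-contained; just make explicit the one step you leave implicit, namely that ${\rm Res}_x\mathcal P$ absorbs tensoring with arbitrary objects of ${\rm D}^b(k\C_x)$ (this holds because ${\rm Res}_x$ is an essentially surjective tensor functor, so any $Y\in{\rm D}^b(k\C_x)$ is ${\rm Res}_x(Y')$ and $Y\hat\otimes{\rm Res}_x(S_{x,k})={\rm Res}_x(Y'\hat\otimes S_{x,k})\in{\rm Res}_x\mathcal P$).
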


\begin{proof}
``(1)$\Rightarrow$ (2)"	For any $X^{\bullet}\in \mathcal N^x_e$, we have that $X^{\bullet}\hat{\otimes} S_{x,k}=X^{\bullet}_x=0\in \mathcal P$. Since $\mathcal P$ is prime and $S_{x,k}\notin \mathcal P$, we have $X^{\bullet}\in \mathcal \mathcal P$.

``(2)$\Rightarrow$ (3)" Since $\mathcal P\subsetneq {\rm D}^b(k\C)$, there is $X^{\bullet}\in {\rm D}^b(k\C)- \mathcal P$. We claim that $X^{\bullet}_x={\rm Res}_x X^{\bullet}\notin {\rm Res}_x \mathcal P$. Otherwise, since $\mathcal N^x_e\subseteq \mathcal P$, we have $X^{\bullet}\in {\rm Res}_x^{-1}({\rm Res}_x \mathcal P)=\mathcal P$ by Lemma \ref{VH} (3). This is a contradiction. Hence ${\rm Res}_x X^{\bullet}\in {\rm D}^b(k\C_x)- {\rm Res}_x \mathcal P$. Then we are done.

``(3)$\Rightarrow$ (1)" Assume $S_{x,k}\in \mathcal P$. Then we have $S_{x,k}\in {\rm Res}_x \mathcal P$. For any $X^{\bullet}\in {\rm D}^b(k\C)$, we have  ${\rm Res}_x X^{\bullet}=X^{\bullet}\hat{\otimes} S_{x,k}\in {\rm Res}_x \mathcal P$. Then we have $X^{\bullet}\in {\rm Res}_x^{-1}({\rm Res}_x \mathcal P)=\mathcal P$ by Lemma~\ref{P}. This is a contradiction.
\end{proof}

\begin{thm}\label{S}
	Let $\C$ be a finite EI category. Then there is a homeomorphism
	\[{\rm Spc} {\rm D}^b(k\C)\overset{\sim}{\longrightarrow} \bigsqcup\limits_{x\in \C} {\rm Spc} {\rm D}^b(k\C_x),\] where the right hand side is
	a disjoint union.
\end{thm}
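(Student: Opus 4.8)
The plan is to assemble the homeomorphism out of the pieces already developed in the excerpt. By Remark~\ref{V}(2), for each object $x$ we have a homeomorphism ${\rm Spc}({\rm Res}_x) : {\rm Spc}\,{\rm D}^b(k\C_x)\overset{\sim}{\longrightarrow} V_x$, where $V_x=\{\mathcal P\in {\rm Spc}\,{\rm D}^b(k\C)\mid \mathcal N^x_e\subseteq \mathcal P\}$. So it suffices to show that the $V_x$ form a partition of ${\rm Spc}\,{\rm D}^b(k\C)$ into open-and-closed subsets, and that the induced bijection $\bigsqcup_x {\rm Spc}\,{\rm D}^b(k\C_x)\to {\rm Spc}\,{\rm D}^b(k\C)$ is a homeomorphism. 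The covering $\bigcup_x V_x = {\rm Spc}\,{\rm D}^b(k\C)$ and the pairwise disjointness of the $V_x$ are exactly the content of Lemma~\ref{P} (equivalently, Lemma~\ref{EC}): a prime $\mathcal P$ lies in $V_x$ iff $\mathcal N^x_e\subseteq\mathcal P$ iff $S_{x,k}\notin\mathcal P$ iff ${\rm Res}_x\mathcal P\subsetneq {\rm D}^b(k\C_x)$, and this happens for exactly one $x$.

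Next I would identify $V_x$ topologically as a basic open set. Since $S_{x,k}$ is an object of ${\rm D}^b(k\C)$, the set $U(S_{x,k})=\{\mathcal P\mid S_{x,k}\in\mathcal P\}$ is open in ${\rm Spc}\,{\rm D}^b(k\C)$, and by Lemma~\ref{EC} its complement is precisely $V_x$; hence each $V_x$ is closed. On the other hand, for $x$ fixed, $V_x$ is the complement of $\bigcup_{y\neq x} V_y = \bigcup_{y\neq x} U(S_{y,k})^c$, i.e. $V_x = \bigcap_{y\neq x} U(S_{y,k})$ since $\C$ has only finitely many objects; as a finite intersection of opens this is open. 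Therefore each $V_x$ is clopen, so the disjoint union decomposition ${\rm Spc}\,{\rm D}^b(k\C)=\bigsqcup_x V_x$ is a topological (not merely set-theoretic) coproduct.

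Finally I would assemble the global map. Define $\Phi : \bigsqcup_x {\rm Spc}\,{\rm D}^b(k\C_x)\to {\rm Spc}\,{\rm D}^b(k\C)$ on the $x$-th component to be ${\rm Spc}({\rm Res}_x)$. By Lemma~\ref{P}, every prime of ${\rm D}^b(k\C)$ is ${\rm Res}_x^{-1}({\rm Res}_x\mathcal P)$ for a unique $x$, so $\Phi$ is a bijection onto ${\rm Spc}\,{\rm D}^b(k\C)$, with $\Phi$ restricting on the $x$-th component to the homeomorphism ${\rm Spc}({\rm Res}_x):{\rm Spc}\,{\rm D}^b(k\C_x)\overset{\sim}{\to}V_x$ from Remark~\ref{V}(2). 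Since a continuous bijection out of a coproduct (here of finitely many spaces) that is a homeomorphism on each clopen piece $V_x$ is itself a homeomorphism, $\Phi$ is the desired homeomorphism.

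The main obstacle is not any single hard estimate but making sure the bookkeeping is airtight: one must check that the $V_x$ are genuinely disjoint (not just that each prime lies in \emph{some} $V_x$) and that they are clopen, using finiteness of $\Obj\,\C$ in an essential way; the disjointness and the ``exactly one $x$'' statement both rest on Lemma~\ref{P}, so the real work has already been done there, and the remaining argument is a clean patching of local homeomorphisms over a finite clopen cover.
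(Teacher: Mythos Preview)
Your proposal is correct and follows essentially the same approach as the paper: partition ${\rm Spc}\,{\rm D}^b(k\C)$ into the pieces $V_x$ using Lemma~\ref{P} and Lemma~\ref{EC}, identify each $V_x$ with ${\rm Spc}\,{\rm D}^b(k\C_x)$ via Remark~\ref{V}(2), and observe $V_x={\rm supp}(S_{x,k})$ is closed. The only difference is cosmetic: you spell out that each $V_x$ is also open (as a finite intersection $\bigcap_{y\neq x}U(S_{y,k})$), whereas the paper leaves implicit that a finite partition into closed sets automatically has clopen pieces.
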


\begin{proof}
Let $\mathcal P\in {\rm Spc} {\rm D}^b(k\C)$ be a prime ideal. There is a unique $x\in \Obj\C$ such that ${\rm Res}_x \mathcal P\neq {\rm D}^b(k\C_x)$ by Lemma~\ref{P}. Then there is a unique $x\in \Obj\C$ such that $\mathcal N^x_e\subseteq \mathcal P$ by Lemma~\ref{EC}, that is, there is a unique $x\in \Obj\C$ such that $\mathcal P\in V_x$, where $V_x=\{\mathcal P\in {\rm Spc}{\rm D}^b(k\C)\mid \mathscr N^x_e\subseteq \mathcal P\}$. Hence we have ${\rm Spc} {\rm D}^b(k\C)=\bigsqcup\limits_{x\in \C} V_x$, where the right hand side is
a disjoint union. 
There is a homeomorphism
${\rm Spc}{\rm D}^b(k\C_x)\overset{\sim}{\rightarrow} V_x$ for each object $x$ by Remark~\ref{V}. And  by Lemma~\ref{EC}, $V_x={\rm supp}_{{\rm D}^b(k\C)}(S_{x,k})$ is a close set. Then we are done. 
\end{proof}

\section{Spectra of singularity categories}

We say that $\C$ is \emph{projective over $k$} if each
$k{\rm Aut}_{\C}(y)$-$k{\rm Aut}_{\C}(x)$-bimodule $k{\rm Hom}_{\C}(x,y)$ is projective on both sides; see~\cite[Definition 4.2]{WR}. For example, a finite transporter category is a finite projective EI category; see ~\cite[Example 5.2]{WR}. We recall the fact that the category algebra $k\C$ is Gorenstein if and only if  $\C$ is projective over $k$, see ~\cite[Proposition 5.1]{WR}. If $\C$ is projective, then we have a tensor triangle equivalence
$k\C\text{-{\underline{\rm Gproj}}} \overset{\sim}{\longrightarrow} {\rm D}_{sg}(k\C)$; see~\cite{WR1,WR2}. Recall that the singularity category of $k\C$ is the Verdier quotient category ${\rm D}_{\rm sg}(k\C)={\rm D}^b(k\C)/{\rm D}^b(k\C\text{-{\rm proj}})$.

\begin{lem}\label{P1}
Assume that $\C$ is projective and $\mathcal P\in {\rm Spc} {\rm D}^b(k\C)$. Then  the following are equivalent :
\begin{enumerate}
	\item ${\rm D}^b(k\C\text{-{\rm proj}})\subseteq \mathcal P$;
	\item There is a unique object $x$ such that $\mathcal N^x_e\subseteq \mathcal P$ and ${\rm D}^b(k\C_x\text{-{\rm proj}})\subseteq {\rm Res}_x \mathcal P$.
\end{enumerate}
\end{lem}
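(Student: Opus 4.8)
The plan is to begin by observing that the object $x$ in statement (2) is forced rather than chosen. By Lemma~\ref{P}, for any prime $\mathcal{P}$ there is a unique $x \in {\rm Obj}\,\C$ with ${\rm Res}_x\mathcal{P} \subsetneq {\rm D}^b(k\C_x)$, and by Lemma~\ref{EC} this is precisely the unique $x$ with $\mathcal{N}^x_e \subseteq \mathcal{P}$ (equivalently $S_{x,k}\notin\mathcal{P}$). Hence the uniqueness clause in (2) is automatic, and I only need to prove, for this distinguished $x$, the equivalence of ${\rm D}^b(k\C\text{-{\rm proj}}) \subseteq \mathcal{P}$ with ${\rm D}^b(k\C_x\text{-{\rm proj}}) \subseteq {\rm Res}_x\mathcal{P}$; in the direction $(1)\Rightarrow(2)$ one simply exhibits this $x$, which Lemma~\ref{P} supplies.

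The whole argument hinges on one module-level computation. For an object $y$, the indecomposable projective $k\C$-module $P_y = k{\rm Hom}_{\C}(y,-)$ restricts to ${\rm res}_x(P_y) = k{\rm Hom}_{\C}(y,x)$, viewed as a left $k{\rm Aut}_{\C}(x)$-module by post-composition; since $\C$ is projective over $k$, this module is projective over $k{\rm Aut}_{\C}(x) = k\C_x$. Because every finitely generated projective $k\C$-module is a direct summand of a finite sum of such $P_y$'s and ${\rm res}_x$ is exact and additive, it follows that ${\rm Res}_x$ carries ${\rm D}^b(k\C\text{-{\rm proj}})$ into ${\rm D}^b(k\C_x\text{-{\rm proj}})$. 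Finally, taking $y = x$ gives ${\rm res}_x(P_x) = k{\rm Aut}_{\C}(x) = k\C_x$, the regular module, which thickly generates ${\rm D}^b(k\C_x\text{-{\rm proj}})$ (every finitely generated projective being a summand of a free one).

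With this in hand both implications become formal. Since $\mathcal{N}^x_e \subseteq \mathcal{P}$ and ${\rm Res}_x$ is the localization ${\rm D}^b(k\C) \to {\rm D}^b(k\C)/\mathcal{N}^x_e \simeq {\rm D}^b(k\C_x)$, Lemma~\ref{VH}(3) gives ${\rm Res}_x^{-1}({\rm Res}_x\mathcal{P}) = \mathcal{P}$, and by Lemma~\ref{P} the subcategory ${\rm Res}_x\mathcal{P}$ is a prime, in particular thick, ideal of ${\rm D}^b(k\C_x)$. For $(1)\Rightarrow(2)$: from ${\rm D}^b(k\C\text{-{\rm proj}}) \subseteq \mathcal{P}$ we obtain $k\C_x = {\rm Res}_x(P_x) \in {\rm Res}_x\mathcal{P}$; as ${\rm Res}_x\mathcal{P}$ is thick it then contains the thick subcategory generated by $k\C_x$, which is ${\rm D}^b(k\C_x\text{-{\rm proj}})$. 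For $(2)\Rightarrow(1)$: for any $X^{\bullet} \in {\rm D}^b(k\C\text{-{\rm proj}})$ the computation above gives ${\rm Res}_x X^{\bullet} \in {\rm D}^b(k\C_x\text{-{\rm proj}}) \subseteq {\rm Res}_x\mathcal{P}$, hence $X^{\bullet} \in {\rm Res}_x^{-1}({\rm Res}_x\mathcal{P}) = \mathcal{P}$; since $X^{\bullet}$ was arbitrary, ${\rm D}^b(k\C\text{-{\rm proj}}) \subseteq \mathcal{P}$.

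The step I expect to need the most care is the module computation: one must match the handedness in the definition of ``$\C$ projective over $k$'' — it asserts that $k{\rm Hom}_{\C}(x',y')$ is projective as a left $k{\rm Aut}_{\C}(y')$-module and as a right $k{\rm Aut}_{\C}(x')$-module, so specialising $x'=y$, $y'=x$ makes $k{\rm Hom}_{\C}(y,x)$ projective as a left $k{\rm Aut}_{\C}(x)$-module — and one must check that restriction along $\C_x \hookrightarrow \C$, being exact and additive, transports this statement from modules to bounded complexes of projectives. This is also where the projectivity hypothesis on $\C$ is genuinely used: for $y \neq x$ the module ${\rm res}_x(P_y)$ need not be projective over $k\C_x$ in general. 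Everything else is bookkeeping with Lemmas~\ref{VH}, \ref{P}, \ref{EC} together with the standard fact that the regular module thickly generates the category of perfect complexes.
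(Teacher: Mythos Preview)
Your proof is correct. The $(2)\Rightarrow(1)$ direction is essentially the paper's: both show that ${\rm Res}_x$ sends perfect complexes to perfect complexes and then invoke ${\rm Res}_x^{-1}({\rm Res}_x\mathcal P)=\mathcal P$; the only difference is that you verify the module-level fact directly from the definition of ``$\C$ projective over $k$'' by computing ${\rm res}_x(P_y)=k{\rm Hom}_\C(y,x)$, whereas the paper cites \cite[Corollary 3.6]{WR}. The $(1)\Rightarrow(2)$ direction, however, is genuinely different. The paper introduces the functor ${\rm Inc}_x : k\C_x\text{-mod}\to k\C\text{-mod}$ (extension by zero), shows via \cite[Corollary 3.6]{WR} that ${\rm Inc}_x$ preserves finite projective dimension (so ${\rm Inc}_x$ sends ${\rm D}^b(k\C_x\text{-proj})$ into ${\rm D}^b(k\C\text{-proj})\subseteq\mathcal P$), and then uses ${\rm Res}_x\circ{\rm Inc}_x\simeq{\rm Id}$ to conclude. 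You instead observe that $P_x\in{\rm D}^b(k\C\text{-proj})\subseteq\mathcal P$ restricts to the regular module $k\C_x$, and since ${\rm Res}_x\mathcal P$ is a prime (hence thick) ideal, it must contain the thick subcategory generated by $k\C_x$, namely ${\rm D}^b(k\C_x\text{-proj})$. Your argument is shorter, avoids introducing ${\rm Inc}_x$, and in fact shows that the projectivity hypothesis on $\C$ is not needed for the implication $(1)\Rightarrow(2)$; the paper's route, on the other hand, makes the two directions symmetric and exhibits ${\rm Inc}_x$ as a partial inverse to ${\rm Res}_x$ at the level of perfect complexes.
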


\begin{proof}
``(1)$\Rightarrow$ (2)" Assume ${\rm D}^b(k\C\text{-{\rm proj}})\subseteq \mathcal P$. Then there is a unique object $x$ such that $\mathcal N^x_e\subseteq \mathcal P$ by Lemma \ref{P} and Lemma \ref{EC}. Let $M$ be a $k\C_x$-module. Denote by ${\rm Inc}_x M$ the functor from $\C$ to $k$-mod sending $x$ to $M(x)$ and other objects to zero. Let $X^{\bullet}\in {\rm D}^b(k\C_x\text{-{\rm proj}})$. Denote by ${\rm Inc}_x X^{\bullet}$ the complex in ${\rm D}^b(k\C)$ with the $i$-th component $({\rm Inc}_x X^{\bullet})^i={\rm Inc}_x X^i$. We claim that ${\rm Inc}_x X^{\bullet}\in {\rm D}^b(k\C\text{-{\rm proj}})$. Indeed, let $M$ be a $k\C_x$-module with finite projective dimension. Since $\C$ is projective, we have that the $k\C$-module ${\rm Inc}_x M$ has finite projective dimension by \cite[Corollary 3.6]{WR}. This implies ${\rm Inc}_x X^{\bullet}\in {\rm D}^b(k\C\text{-{\rm proj}})$. We observe that $X^{\bullet}={\rm Res}_x {\rm Inc}_x X^{\bullet}$. Since ${\rm Inc}_x X^{\bullet}\in {\rm D}^b(k\C\text{-{\rm proj}})\subseteq \mathcal P$, we have $X^{\bullet}\in {\rm Res}_x \mathcal P$. 
	
``(2)$\Rightarrow$ (1)" Assume that there is a unique object $x$ such that $\mathcal N^x_e\subseteq \mathcal P$ and ${\rm D}^b(k\C_x\text{-{\rm proj}})\subseteq {\rm Res}_x \mathcal P$. Then we have ${\rm Res}_x \mathcal P\subsetneq {\rm D}^b(k\C_x)$ by Lemma~\ref{EC}. Let $X^{\bullet}\in {\rm D}^b(k\C\text{-{\rm proj}})$. We claim that $X^{\bullet}_x={\rm Res}_x X^{\bullet}\in {\rm D}^b(k\C_x\text{-{\rm proj}})$. Indeed, let $M$ be a $k\C$-module with finite projective dimension. Since $\C$ is projective, we have that $M_x$ is a projective $k\C_x$-module by \cite[Corollary 3.6]{WR}. This implies $X^{\bullet}_x={\rm Res}_x X^{\bullet}\in {\rm D}^b(k\C_x\text{-{\rm proj}})\subseteq {\rm Res}_x \mathcal P$. Hence $X^{\bullet}\in {\rm Res}_x^{-1}({\rm Res}_x \mathcal P)=\mathcal P$ by Lemma~\ref{P}. Then we are done.
\end{proof}


\begin{thm}\label{PS}
	Let $\C$ be a finite projective EI category. Then there is a homeomorphism
	\[{\rm Spc} {\rm D}_{\rm sg}(k\C)\overset{\sim}{\longrightarrow} \bigsqcup\limits_{x\in \C} {\rm Spc} (kG_x\text{-{\underline{\rm mod}}})
	,\] where the right hand side is
	a disjoint union, and $G_x=\Ac(x)$. 
\end{thm}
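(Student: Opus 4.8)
The plan is to combine the decomposition of $\mathrm{Spc}\,\mathrm{D}^b(k\C)$ from Theorem~\ref{S} with the localization lemma (Lemma~\ref{VH}) applied to the quotient $\mathrm{D}_{\rm sg}(k\C)=\mathrm{D}^b(k\C)/\mathrm{D}^b(k\C\text{-proj})$, using Lemma~\ref{P1} to match the two pictures object by object. First I would recall that $\mathrm{D}^b(k\C\text{-proj})$ is a tensor ideal of $\mathrm{D}^b(k\C)$ (it is thick and closed under $\hat\otimes$, since tensoring a perfect complex with anything keeps it perfect — this uses that $S_{x,k}$ and the indecomposable projectives behave well under $\hat\otimes$, or one cites \cite{WR1,WR2}), so the quotient is genuinely a tensor triangulated category and the localization functor $q:\mathrm{D}^b(k\C)\to \mathrm{D}_{\rm sg}(k\C)$ is a tensor triangulated functor. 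By Lemma~\ref{VH}(1), $\mathrm{Spc}(q)$ is a homeomorphism from $\mathrm{Spc}\,\mathrm{D}_{\rm sg}(k\C)$ onto the subspace $W=\{\mathcal P\in \mathrm{Spc}\,\mathrm{D}^b(k\C)\mid \mathrm{D}^b(k\C\text{-proj})\subseteq \mathcal P\}$.

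Next I would restrict the homeomorphism of Theorem~\ref{S} to $W$. A prime $\mathcal P$ lies in $W$ iff it lies in some (necessarily unique) $V_x$ with $\mathrm{D}^b(k\C\text{-proj})\subseteq \mathcal P$, which by Lemma~\ref{P1} is equivalent to: $\mathcal N^x_e\subseteq \mathcal P$ and $\mathrm{D}^b(k\C_x\text{-proj})\subseteq \mathrm{Res}_x\mathcal P$. Transporting through the homeomorphism $\mathrm{Spc}(\mathrm{Res}_x):\mathrm{Spc}\,\mathrm{D}^b(k\C_x)\xrightarrow{\sim} V_x$ of Remark~\ref{V}, the condition $\mathrm{D}^b(k\C_x\text{-proj})\subseteq\mathrm{Res}_x\mathcal P$ cuts out exactly the subspace $\{\mathcal Q\in \mathrm{Spc}\,\mathrm{D}^b(k\C_x)\mid \mathrm{D}^b(k\C_x\text{-proj})\subseteq \mathcal Q\}$, which by Lemma~\ref{VH}(1) applied to the localization $\mathrm{D}^b(k\C_x)\to \mathrm{D}_{\rm sg}(k\C_x)$ is homeomorphic to $\mathrm{Spc}\,\mathrm{D}_{\rm sg}(k\C_x)$. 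Hence
\[
\mathrm{Spc}\,\mathrm{D}_{\rm sg}(k\C)\;\cong\; W\;\cong\; \bigsqcup_{x\in\C}\{\mathcal Q\in\mathrm{Spc}\,\mathrm{D}^b(k\C_x)\mid \mathrm{D}^b(k\C_x\text{-proj})\subseteq\mathcal Q\}\;\cong\;\bigsqcup_{x\in\C}\mathrm{Spc}\,\mathrm{D}_{\rm sg}(k\C_x),
\]
where the middle disjoint-union decomposition is inherited from $\mathrm{Spc}\,\mathrm{D}^b(k\C)=\bigsqcup_x V_x$ and one checks each piece is still clopen in $W$ (it is the trace of the clopen $V_x$). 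Finally, since $\C_x$ has the single object $x$ with $\Homc(x,x)=G_x$, the category algebra $k\C_x$ is just the group algebra $kG_x$, which is self-injective, so $\mathrm{D}_{\rm sg}(kG_x)\simeq kG_x\text{-}\underline{\mathrm{mod}}$ as tensor triangulated categories (the group-algebra case of the equivalence $k\C\text{-}\underline{\mathrm{Gproj}}\xrightarrow{\sim}\mathrm{D}_{\rm sg}(k\C)$ recalled before Lemma~\ref{P1}, noting every $kG_x$-module is Gorenstein-projective). This gives the claimed homeomorphism, and it is compatible with the tensor-triangular structure throughout.

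The main obstacle is the bookkeeping in the middle step: one must be careful that Lemma~\ref{P1} is applied with the correct uniqueness of $x$ (so that the decomposition of $W$ is a genuine disjoint union and not an overcounting), and that the subspace topology is preserved at every stage — in particular that $\mathrm{Spc}(\mathrm{Res}_x)$ restricts to a homeomorphism between the two "primes-containing-projectives" subspaces, which follows formally from Lemma~\ref{VH}(2) and (3) but deserves an explicit sentence. The identification $k\C_x\cong kG_x$ and the passage to $kG_x\text{-}\underline{\mathrm{mod}}$ is routine given the cited results. I would also remark that the whole homeomorphism respects the monoidal structure, since every functor in sight ($q$, $\mathrm{Res}_x$, the Schur functor, and the Gorenstein-projective equivalence) is tensor triangulated.
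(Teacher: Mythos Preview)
Your proposal is correct and follows essentially the same route as the paper: identify $\mathrm{Spc}\,\mathrm{D}_{\rm sg}(k\C)$ with the subspace $V=\{\mathcal P\mid \mathrm{D}^b(k\C\text{-proj})\subseteq\mathcal P\}$ of $\mathrm{Spc}\,\mathrm{D}^b(k\C)$ via Lemma~\ref{VH}(1), decompose $V$ object by object using Lemma~\ref{P1}, and then identify each piece with $\mathrm{Spc}\,\mathrm{D}_{\rm sg}(k\C_x)\simeq\mathrm{Spc}(kG_x\text{-}\underline{\mathrm{mod}})$ again via Lemma~\ref{VH}(1). The only cosmetic difference is that the paper verifies closedness of each piece by computing it as $\mathrm{supp}_{\mathrm{D}_{\rm sg}(k\C)}(S_{x,k})$ through Lemma~\ref{VH}(2)--(3), whereas you obtain clopenness by intersecting with the clopen $V_x$ from Theorem~\ref{S}; both are fine.
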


\begin{proof}
We have $kG_x\text{-{\underline{\rm mod}}}=kG_x\text{-{\underline{\rm Gproj}}}\simeq {\rm D}_{\rm sg}(k\C_x)$ for each object $x$. Then we only need to prove that there is a homeomorphism
 \[{\rm Spc} {\rm D}_{\rm sg}(k\C)\overset{\sim}{\longrightarrow} \bigsqcup\limits_{x\in \C} {\rm Spc} {\rm D}_{\rm sg}(k\C_x).\]
 
Consider the localization functor
\[{\rm Res}_x : {\rm D}^b(k\C)\longrightarrow {\rm D}^b(k\C_x)={\rm D}^b(k\C)/\mathcal N^x_e.\]
By Lemma \ref{VH} (1), the functor ${\rm Res}_x$ induces a homeomorphism
\begin{align}\label{TP2}
{\rm Spc} {\rm D}^b(k\C_x)\overset{\sim}{\longrightarrow} V_x=\{\mathcal P\in {\rm Spc}{\rm D}^b(k\C)\mid \mathcal N^x_e\subseteq \mathcal P\},
\end{align}
where $V_x={\rm supp}_{{\rm D}^b(k\C)}(S_{x,k})$ is a close set.  

Consider the localization functor
\[q : {\rm D}^b(k\C)\longrightarrow {\rm D}_{\rm sg}(k\C)={\rm D}^b(k\C)/{\rm D}^b(k\C\text{-{\rm proj}}).\]
By Lemma \ref{VH} (1), the functor $q$ induces a homeomorphism
\begin{align}\label{TP}
{\rm Spc} {\rm D}_{\rm sg}(k\C)\overset{\sim}{\longrightarrow} V=\{\mathcal P\in {\rm Spc}{\rm D}^b(k\C)\mid {\rm D}^b(k\C\text{-{\rm proj}})\subseteq \mathcal P\},
\end{align}
where $V\subseteq {\rm Spc}{\rm D}^b(k\C)$ is a subspace of ${\rm Spc}{\rm D}^b(k\C)$ of those primes containing ${\rm D}^b(k\C\text{-{\rm proj}})$. By Lemma~\ref{P1}, we have \[V=\bigsqcup\limits_{x\in \C}\{\mathcal P\in {\rm Spc}{\rm D}^b(k\C)\mid \mathcal N^x_e\subseteq \mathcal P;{\rm D}^b(k\C_x\text{-{\rm proj}})\subseteq {\rm Res}_x \mathcal P\}:=\bigsqcup\limits_{x\in \C}V'_x,\]
where $V'_x=\{\mathcal P\in {\rm Spc}{\rm D}^b(k\C)\mid \mathcal N^x_e\subseteq \mathcal P;{\rm D}^b(k\C_x\text{-{\rm proj}})\subseteq {\rm Res}_x \mathcal P\}=\{\mathcal P\in V_x\mid {\rm D}^b(k\C_x\text{-{\rm proj}})\subseteq {\rm Res}_x \mathcal P\}$.

By Lemma \ref{VH} (2) and (3), ${\rm supp}_{{\rm D}_{\rm sg}(k\C)}(S_{x,k})=({\rm Spc}(q))^{-1}({\rm supp}_{{\rm D}^b(k\C)}(S_{x,k}))=V'_x$ for each object $x$.

Consider the localization functor
\[q' : {\rm D}^b(k\C_x)\simeq {\rm D}^b(k\C)/ \mathcal N^x_e\longrightarrow {\rm D}_{\rm sg}(k\C_x)\simeq {\rm D}^b(k\C)/\left \langle\mathcal N^x_e,{\rm D}^b(k\C_x\text{-{\rm proj}})\right \rangle,\]
where $\left \langle\mathcal N^x_e,{\rm D}^b(k\C_x\text{-{\rm proj}})\right \rangle$ denote the tensor ideal of ${\rm D}^b(k\C)$ generated by $\mathcal N^x_e$ and ${\rm D}^b(k\C_x\text{-{\rm proj}})$.

By Lemma \ref{VH} (1), the functor $q'$ induces a homeomorphism
\[{\rm Spc} {\rm D}_{\rm sg}(k\C_x)\overset{\sim}{\longrightarrow} V'_x=\{\mathcal P\in {\rm Spc}{\rm D}^b(k\C)\mid \mathcal N^x_e\subseteq \mathcal P;{\rm D}^b(k\C_x\text{-{\rm proj}})\subseteq {\rm Res}_x \mathcal P\}.\]
 
Since $V'_x$ is a close set, then we have a homeomorphism \begin{align}\label{TP1}
\bigsqcup\limits_{x\in \C} {\rm Spc} {\rm D}_{\rm sg}(k\C_x)\overset{\sim}{\longrightarrow} \bigsqcup\limits_{x\in \C} V'_x=V.
\end{align}
Then we are done by the homeomorphisms (\ref{TP}) and (\ref{TP1}).
\end{proof}

\section*{Acknowledgements}
The author is grateful to her supervisor Professor Xiao-Wu Chen for
his encouragements and discussions. This work is supported by the National Natural
Science Foundation of China (No.s 11522113, 11571329, 11671174 and 11671245), and the Fundamental Research Funds for the Central Universities.


\end{document}